\newtheorem{theorem}{Theorem}
\newtheorem{lemma}[theorem]{Lemma}
\newtheorem{corollary}[theorem]{Corollary}
\newtheorem{remark}[theorem]{Remark}
\definecolor{olivegreen}{rgb}{0.05,0.7,0.1}
\title{Partitions of complete twisted graphs into plane spanning trees}
\author{Ana Paulina Figueroa} 
\affil{Departamento Acad\'emico de Matem\'aticas, Instituto Tecnol\'ogico Aut\'onomo de M\'exico.}
\author{Eduardo Rivera-Campo}
\affil{Departamento de Matem\'aticas\\Universidad Aut\'onoma Metropolitana - Iztapalapa.}
\date{}
\begin{document}

\maketitle              
\begin{abstract}
We characterize all partitions of the  complete twisted  graph $T_{2n}$ into plane spanning trees. In the case of partitions of $T_{2n}$ into isomorphic  plane spanning trees, we show that all trees in these partitions must be balanced double stars. As a consequence of our results, any complete topological graph with $n$ vertices contains a complete topological subgraph with $m \geq c\log^{1/8} n$ vertices  that admits a partition into plane spanning trees.

\medskip

\noindent {\bf Keywords.} Twisted Graph. Plane Spanning Tree. Partition.

\end{abstract}

\section{Introduction}
\label{Introducction}

Let $P$ be a set of points in the plane. A \emph{topological graph} with vertex set $P$ is a  graph $G$ where edges are simple continuous arcs drawn in the plane in such a way that any two edges have at most one point in common. Two topological graphs $G$ and $F$ are called \emph{weakly isomorphic} if there is a graph isomorphism between $G$ and $F$ such that two edges of $G$ intersect if and only if the corresponding edges of $F$ do.
A \emph{geometric graph} is a topological graph where all edges are straight line segments. A geometric graph $G$ is \emph{convex} if the vertices of $G$ are in convex position. We denote the complete convex graph with $n$ vertices by $C_n$.

The well-known so called ``Happy Ending" theorem of Erd\H{o}s and Szekeres \cite{ES35,ES60} can be reformulated as follows: every complete geometric graph with $n$ vertices has a complete geometric subgraph which is weakly isomorphic to a complete convex graph $C_m$ with $m \geq c \log n$. Harborth and Mengersen \cite{HM92} introduced a class of topological graphs (later named \emph{twisted graphs}) as examples of topological graphs without topological subgraphs weakly isomorphic to any complete convex geometric graph $C_n$ with $n \geq 5$.

The complete twisted graph $T_n$ is a complete topological graph with $n$ vertices $v_1, v_2, \ldots, v_n$ in which two edges $v_iv_j$ $(i<j)$ and $v_sv_t$ $(s<t)$ cross each other if and only if either $i<s<t<j$ or $s<i<j<t$, see Fig. \ref{T_6}. 

\begin{figure}[ht!]
\centering
  \includegraphics[width= 4in]{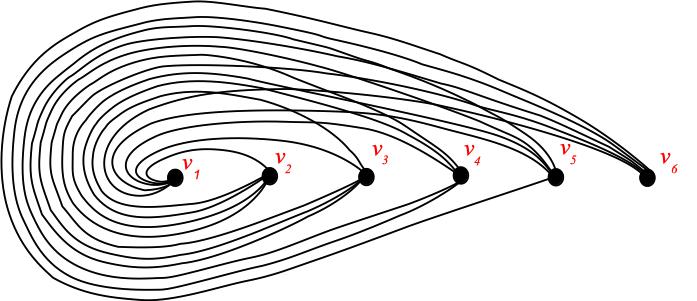}
 \caption{Complete twisted graph $T_6$.}
  \label{T_6}
\end{figure}

Pach et al. \cite{PST03} proved that every complete topological graph with $n$ vertices contains a complete topological subgraph with $m \geq c\log^{1/8} n$ vertices which is weakly isomorphic to either the complete convex graph $C_m$ or to the complete twisted graph $T_m$. Arroyo et al. \cite{ARST19} generalized this result to abstract rotational systems. 

The above result by Pach et al. motivated the study of further properties of twisted graphs; in particular the study of properties shared by convex and twisted graphs. If a given property $P$ is shared by complete convex graphs and complete twisted graphs, then any complete topological graph with $n$ vertices contains a complete topological subgraph with $m \geq c\log^{1/8} n$ vertices  that satisfies property $P$.

Given a topological graph $G$, a subgraph $F$ of $G$ is \emph{plane} or \emph{non self-crossing} if no two edges of $F$ cross in $G$.

Oma\~{n}a-Pulido and  Rivera-Campo \cite{OR12};  \'Abrego et al. \cite{AFFMR22} and  Figueroa and Fres\'an-Figueroa \cite{FF20} studied some problems for which the corresponding versions for the complete convex graph $C_n$ are known.

A \emph{partition} of a complete graph $K_{2n}$ into spanning trees is a collection of $n$ edge disjoint spanning trees $S_1, S_2, \ldots, S_n$ of $K_{2n}$. Since $K_{2n}$ has ${\binom{2n}{2}} = n(2n-1)$ edges and each spanning tree of $K_{2n}$ has $2n-1$ edges, $(E(S_1), E(S_2), \ldots, E(S_n))$ is a partition of the set of edges of $K_{2n}$.

A textbook exercise states that for each positive integer $n$ there is a partition of the complete abstract graph $K_{2n}$ into $n$ spanning paths and a partition of $K_{2n}$ into $n$  spanning balanced double stars. Jaeger et al. \cite{JPK83} gave a generalization of this result concerning partitions of $(2k+1)$-regular graphs with perfect matchings into balanced double stars of order $2(k+1)$. For topological graphs a non self-crossing condition on each tree in the partition is required which changes the problem drastically from that of abstract graphs. For instance, as a corollary of Lemma \ref{primeroiso} in Section \ref{Section 4}, one can see that for $n \geq 3$ there is no partition of the complete twisted graph $T_{2n}$ into $n$ plane spanning paths.

Recently,  the long-standing conjecture that any complete geometric graph on $2n$ vertices can be partitioned into $n$ plane spanning trees was disproved by Aichholzer et al. \cite{AOOPSSTV}.  For complete convex graphs, the existence of such a partition follows from a result by Bernhart and Kainen  \cite{BK79}; Bose et al. \cite{BHRW06} gave a characterization of such partitions; as it turns out, for each partition of $C_n$ into plane spanning trees, all trees in the partition must be pairwise isomorphic. In this paper we characterize partitions of the complete twisted graph $T_{2n}$ into arbitrary plane spanning trees and partitions of $T_{2n}$ into isomorphic plane spanning trees, see Fig. \ref{2particiones}.


\begin{figure}[ht!]
\centering
  \includegraphics[width= 4.5 in]{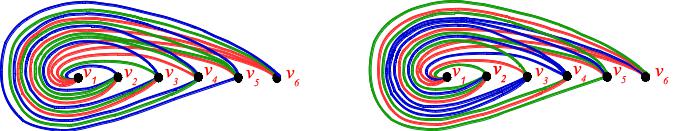}
 \caption{A partition of $T_6$ into non-isomorphic plane trees (left) and a partition of $T_6$ into isomorphic plane trees (right).}
  \label{2particiones}
\end{figure}

Let $T_{2n}$ be a complete twisted graph with vertices $v_1,v_2, \ldots, v_{2n}$. For the purpose of this article it is convenient to rename vertices $v_{n+1},v_{n+2}, \ldots, v_{2n}$ as follows: for $i = 1, 2, \ldots, n$ let $w_i = v_{2n+1-i}$. From now onward the vertices of $T_{2n}$ will be denoted by $v_1, v_2, \ldots, v_n, w_n, w_{n-1}, \ldots, w_1 $ in that order.

Let $n$ be a positive integer. A \emph{spanning balanced double star} of $T_{2n}$ with center edge $v_iw_i$ is a spanning tree of $T_{2n}$ containing edge $v_iw_i$ and such that  $v_i$ is adjacent to half of the remaining vertices and $w_i$ is adjacent to the other half. For example, the red tree in Fig. \ref{2particiones} (left) and all trees in Fig. \ref{2particiones} (right) are spanning balanced double stars of $T_6$.

\section{Preliminary remarks}
\label{Preliminaries}

A spanning tree $S$ of a complete twisted graph $T_{2n}$ is \emph{plane} or \emph{non self-crossing} if no two edges  of $S$ cross in $T_{2n}$.

Let $T_{2n}$ be a complete twisted graph with $2n$ vertices and $\mathcal{S} = \{S_1, S_2, \ldots, \allowbreak S_{n}\}$ be a partition of $T_{2n}$ into plane spanning trees. Since edges $v_1w_1, \allowbreak v_2w_2, \ldots, v_nw_n$ are pairwise crossing in $T_{2n}$, they all must lie in different trees in $\mathcal{S}$.  Without loss of generality, from now onward, we assume edge $v_iw_i$ lies in tree $S_i$ for $i=1, 2, \ldots, n$.

\begin{remark}
\label{S_1}
    In any partition $\mathcal{S} = \{S_1, S_2, \ldots, S_{n}\}$ of the complete twisted graph $T_{2n}$ into plane spanning trees, tree $S_1$ is a balanced double star with center edge $v_1w_1$.
\end{remark}

\begin{proof}
   Vertices $v_2, v_3, \ldots, v_n, w_n, w_{n-1}, \ldots, w_2$ are independent in $S_1$ since all edges joining two of these vertices cross edge $v_1w_1$ which is an edge of $S_1$. Therefore $S_1$ is a double star with center edge $v_1w_1$. 
   
   Clearly $d_{S_i}(v_1) \ge 1$ for $i=2,3, \ldots, n $, therefore vertex $v_1$ cannot be adjacent to more than $n$ vertices in $S_1$. Analogously vertex $w_1$ is adjacent to at most $n$ in $S_1$. This implies $d_{S_1}(v_1) = d_{S_1}(w_1) = n $ and therefore $S_1$ is balanced.
\end{proof}

\begin{remark}
\label{menos}

Let $n \geq 2$ be an integer. If $\mathcal{S} = \{S_1, S_2, \ldots, S_{n}\}$ is a partition of the complete twisted graph $T_{2n}$ into plane spanning trees, then  $S_2 - \{v_1,w_1\},  S_3 - \{v_1,w_1\}, \allowbreak \ldots, S_n - \{v_1,w_1\}$ is a partition of the complete twisted graph with vertices $v_2, v_3, \ldots, v_{n}, w_n, w_{n-1}, \ldots, w_2$ into plane spanning trees.
    
\end{remark}

\begin{proof}
  By Remark \ref{S_1}, $d_{S_1}(v_1) = d_{S_1}(w_1) = n$. This implies that vertices $v_1$ and $w_1$ have degree $1$ in trees $S_2, S_3, \ldots, S_n$. Therefore $S_i - \{v_1,w_1\}$ is a plane tree for $i=2, 3, \ldots, n$. Since $v_2, v_3, \ldots v_n, w_n, w_{n-1}, \ldots, w_2$ are independent in $S_1$, all edges joining two of these vertices  lie in one of the trees $S_2 - \{v_1,w_1\}, S_3 - \{v_1,w_1\}, \ldots, S_n - \{v_1,w_1\}$. 
\end{proof}

\section{Arbitrary partitions}

In this section we study partitions of $T_{2n}$ into arbitrary plane spanning trees.

\begin{lemma}
\label{induced}
Let $n \geq 2$ be an integer. If $\mathcal{S} = \{S_1, S_2, \ldots, S_{n}\}$ is a partition of the complete twisted graph $T_{2n}$ into plane spanning trees, then for $i=1, 2, \ldots, n-1$, the subgraph $S_i[V_i]$ of $S_i$ induced by the set of vertices $V_i = \{v_i, v_{i+1}, \ldots, v_n, w_n,\allowbreak w_{n-1}, \ldots, w_i\}$ is a balanced double star with center edge $v_iw_i$. Moreover, tree $S_i[V_i]$ contains edges $v_iw_{n-1}, v_iw_{n-2}, \ldots, v_iw_{i+1}$, edges $w_iv_{n-1}, w_iv_{n-2}, \allowbreak \ldots,  w_iv_{i+1}$ and one of the following pairs of edges $\{v_iw_n, w_iv_n\}$ or $\{v_iv_{n}, w_iw_n\}$.
\end{lemma}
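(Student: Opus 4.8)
The plan is to read off the ``balanced double star'' assertion from the two preliminary remarks and then to pin down the edge set of $S_i[V_i]$ by deciding which trees of $\mathcal S$ can host the long edges $v_iw_j$. First I would apply Remark~\ref{menos} repeatedly: successively deleting $\{v_1,w_1\}$, then $\{v_2,w_2\}$, \dots, then $\{v_{i-1},w_{i-1}\}$ turns $\mathcal S$ into a partition of the complete twisted graph on $V_i=\{v_i,\dots,v_n,w_n,\dots,w_i\}$ into plane spanning trees, in which $v_kw_k$ still lies in the $k$-th tree $S_k[V_k]$ for every $k\ge i$. Applying Remark~\ref{S_1} to this smaller partition gives at once that $S_i[V_i]$ is a balanced double star with center edge $v_iw_i$; carrying this out for every index shows, more generally, that $S_k[V_k]$ is a balanced double star with center edge $v_kw_k$, so that $d_{S_k[V_k]}(v_k)=d_{S_k[V_k]}(w_k)=n-k+1$. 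It then remains only to place the $2(n-i)$ non-central edges of $S_i[V_i]$, and for this it suffices to prove that $v_iw_j\in S_i$ and $w_iv_j\in S_i$ whenever $i+1\le j\le n-1$. Granting that, $v_i$ and $w_i$ are each adjacent in $S_i[V_i]$ to $n-i$ of their $n-i+1$ required neighbours; the only vertices of $V_i$ left unmatched are $v_n$ and $w_n$; and since $S_i[V_i]$ is a tree, each of them must be joined to exactly one of $v_i,w_i$, which forces precisely one of the pairs $\{v_iw_n,w_iv_n\}$ or $\{v_iv_n,w_iw_n\}$.

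To prove the key claim, suppose $v_iw_j\notin S_i$ for some $i+1\le j\le n-1$. Working in the vertex order $v_1,\dots,v_n,w_n,\dots,w_1$, one checks from the definition of crossing in $T_{2n}$ that $v_iw_j$ crosses the central edge $v_\ell w_\ell$ exactly when $\ell<i$ or $\ell>j$; since every $S_\ell$ is plane and contains $v_\ell w_\ell$, the edge $v_iw_j$ must then lie in some $S_k$ with $i+1\le k\le j$ (it is not in $S_i$, by hypothesis). Next, $v_iw_j$ crosses \emph{every} edge of $T_{2n}$ with both endpoints in $D:=\{v_{i+1},\dots,v_n,w_n,\dots,w_{j+1}\}$, the set of vertices lying strictly between $v_i$ and $w_j$ in the order; hence $S_k$ contains no edge inside $D$. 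But $v_k\in D$, while $S_k[V_k]$ is a balanced double star with center $v_kw_k$, so the $n-k\ge 1$ non-central neighbours of $v_k$ in $S_k[V_k]$ --- none of which may lie in $D$ --- are confined to $V_k\setminus D=\{w_k,w_{k+1},\dots,w_j\}$, hence to $\{w_{k+1},\dots,w_j\}$, a set of only $j-k$ elements. Since $j\le n-1<n$, this is smaller than $n-k$, a contradiction; therefore $v_iw_j\in S_i$. The twin statement $w_iv_j\in S_i$ follows by the same argument applied to the image of $\mathcal S$ under the vertex map $v_\ell\leftrightarrow w_\ell$, which reverses the vertex order and is hence a weak automorphism of $T_{2n}$ fixing every central edge and carrying $v_iw_j$ to $w_iv_j$; alternatively one simply repeats the position computation.

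The step I expect to be delicate is the crossing bookkeeping in the second paragraph: one must get exactly right which central edges $v_iw_j$ crosses (so as to confine it to one of $S_{i+1},\dots,S_j$) and which ordinary edges it crosses (so that the host tree has no edge inside $D$), and then confirm that the resulting restriction really does clash with the balanced double star structure on $V_k$ --- concretely, that $v_k$ indeed belongs to $D$ and that $V_k\setminus D$ is exactly $\{w_k,\dots,w_j\}$. By contrast, the two reductions via Remarks~\ref{S_1} and~\ref{menos} and the closing edge-count are routine, and the degenerate instances --- $i=n-1$, or small $n$, where the list of forced edges is empty --- are covered automatically by the same argument.
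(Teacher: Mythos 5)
Your proof is correct, and while it shares the paper's starting point, the mechanism for the crucial step is genuinely different. Like the paper, you iterate Remarks~\ref{S_1} and~\ref{menos} to get that each $S_k[V_k]$ is a balanced double star with center $v_kw_k$; but the paper then proves the edge description by induction on $n$, with the full statement as induction hypothesis: to force $v_1w_j$ into $S_1$ it uses that, by the hypothesis, each inner tree $S_\ell$ already contains one of the ``end pair'' edges $v_\ell v_{k+1}$ or $v_\ell w_{k+1}$, both of which cross $v_1w_j$, and that $S_{k+1}$ contains $v_{k+1}w_{k+1}$. You instead pin down every edge $v_iw_j$ (and $w_iv_j$) with $i+1\le j\le n-1$ directly, for all $i$ at once: the central edges $v_\ell w_\ell$ crossed by $v_iw_j$ rule out all trees except $S_i,\dots,S_j$, and for $i<k\le j$ a pigeonhole on the balanced double star $S_k[V_k]$ (whose center $v_k$ lies in the span $D$ of $v_iw_j$, yet whose $n-k$ non-central neighbours would have to fit into the $j-k<n-k$ vertices of $V_k\setminus D$ other than $w_k$) rules out $S_k$; your crossing computations ($v_iw_j$ crosses $v_\ell w_\ell$ iff $\ell<i$ or $\ell>j$, and crosses every edge inside $D$) and the symmetry $v_\ell\leftrightarrow w_\ell$ are all correct. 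The final placement of $v_n,w_n$ does need the \emph{balance} of $S_i[V_i]$ (not just the tree/double-star property) to exclude both attaching to the same center, but you have that degree count in hand, so this is only a matter of wording. What each route buys: the paper's induction is shorter per step because the hypothesis hands it exactly the crossing partners it needs; your argument is non-inductive in the edge-placement part, self-contained modulo the two remarks, and makes transparent why the ``long'' edges $v_iw_j$, $j\le n-1$, have no home other than $S_i$.
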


\begin{proof}
If $n = 2$, then  $\mathcal{S} = \{S_1, S_2\}$. By Remark \ref{S_1}, tree $S_1$ is a balanced double star with center edge $v_1w_1$ and edge set $\{v_1w_2, v_1w_1, w_1v_2\}$  or $\{v_1v_2, v_1w_1, w_1w_2\}$. 

We proceed by induction assuming the result holds for $n = k$. Let $\mathcal{S} = \{S_1, \allowbreak  S_2,  \ldots,  S_{k+1}\}$ be a partition of the complete twisted graph with vertices $v_1, v_2, \ldots, v_{k+1}, \allowbreak w_{k+1},  w_{k}, \ldots, w_1$ into plane spanning trees.  

For $i=2, 3, \ldots, k+1$ let $S'_i = S_i - \{v_1,w_1\}$. By Remark \ref{menos}, $\mathcal{S'} = \{S'_2, S'_3, \allowbreak \ldots, S'_{k+1}\}$ is a partition of the complete twisted graph with vertices $v_2, v_3, \ldots, \allowbreak \ v_{k+1}, w_{k+1}, w_{k},\ldots, w_2$ into plane spanning trees. 

By the induction hypothesis, for $i = 2, 3, \ldots, k$, the subgraph $S'[V_i]$ of $S'_i$ is a balanced double star with center edge $v_iw_i$ that contains edges $v_iw_k, v_iw_{k-1}, \ldots, \allowbreak v_iw_{i+1}$, edges $w_iv_k, w_iv_{k-1}, \ldots, w_iv_{i+1}$ and one of the pairs of edges $\{v_iw_{k+1}, \allowbreak w_iv_{k+1}\}$ or $\{v_iv_{k+1},  w_iw_{k+1}\}$. Notice that $S'[V_i] = S[V_i]$ for $i = 2, 3, \ldots, k$. 

Finally, by Remark \ref{S_1}, tree $S_1$ is a balanced double star with center edge $v_1w_1$. Therefore each vertex $v_2, v_3, \ldots, v_{k+1}, w_{k+1}, w_k, \ldots, w_2$ is adjacent to either $v_1$ or $w_1$. Let $j$ be an integer with $2 \leq j \leq k$. Notice that for $i = 2, 3, \ldots, k $, edge $v_1w_j$ crosses edges $v_iw_{k+1}$ and $v_iv_{k+1}$ of which one is an edge of $S_i$, therefore for $i = 2, 3, \ldots, k$, edge $v_1w_j$ is not an edge of $S_i$. Since $v_1w_j$ also crosses edge $v_{k+1}w_{k+1}$ which is an edge of $S_{k+1}$, edge $v_1w_j$ is not an edge of $S_{k+1}$ either. Therefore for $j = 2, 3, \ldots, k$, edge $v_1w_j$ must be an edge of $S_1$. Analogously $w_1v_j \in E(S_1)$ for $j = 2, 3, \ldots, k$. 

Since $S_1$ is a balanced double star with center edge $v_1w_1$, either $v_1w_{k+1}, \allowbreak w_1v_{k+1} \in E(S_1)$ or $v_1v_{k+1}, w_1 w_{k+1} \in E(S_1)$. Thus the result also holds for $n=k+1$. By induction it holds for all integers $n \geq 2$.

\end{proof}

\begin{remark}
\label{edgesv_1v_i}
Let $n \geq 2$ be an integer. If $\mathcal{S} = \{S_1, S_2, \ldots, S_{n}\}$ is a partition of the complete twisted graph $T_{2n}$ into plane spanning trees, then edges $v_1v_2, v_1v_3, \allowbreak \ldots, v_1v_n, v_1w_n$ lie in different trees and edges $w_1w_2, w_1w_3,\allowbreak \ldots, w_1w_n, w_1v_n$ also lie in different trees.

\end{remark}

\begin{proof}

By Lemma \ref{induced}, $S_1$ is a balanced double star with center edge $v_1w_1$ that contains edges $w_1v_2, w_1v_3, \ldots, w_1v_{n-1}$ and edges $v_1w_2, v_1w_3, \ldots, v_1w_{n-1}$. Notice that in both cases edges $v_1v_2, v_1v_3, \ldots, v_1v_{n-1}$ and $w_1w_2, w_1w_3, \ldots, w_1w_{n-1}$ are not edges of $S_1$; since $\mathcal{S}$ is a partition of $T_{2n}$, all of these edges must lie in some tree $S_i$ with $i \ge 2 $.  

Also by Lemma \ref{induced}, tree $S_1$ contains  one of the following pairs of edges $\{v_1w_n, w_1v_n\}$ or $\{v_1v_n, w_1w_n\}$. In the first case $v_1w_n, w_1v_n \in E(S_1)$, $v_1v_n \in E(S_i)$  for some $i \geq 2$ and  $w_1w_n \in E(S_j)$  for some $j \geq 2$ and, in the second case $v_1v_n, w_1w_n  \in E(S_1)$, $v_1w_n \in E(S_r)$  for some $r \geq 2$ and $w_1v_n \in E(S_t)$  for some $t \geq 2$. 

From the proof of Remark \ref{menos}, $d_{S_i}(v_1) = d_{S_i}(w_1) = 1$ for $i=2, 3, \ldots, n$. Therefore edges $v_1v_2, v_1v_3, \ldots, v_1v_{n}, v_1w_n$ lie in different trees and $w_1w_2, w_1w_3, \allowbreak   \ldots, w_1w_{n},w_1v_n$ lie in different trees.
\end{proof}

Next we give a recursive formula for the number of partitions of $T_{2n}$ into arbitrary plane spanning trees.

\begin{theorem}
\label{recursiva} 

The number $p_n$ of partitions of the complete twisted graph $T_{2n}$ into arbitrary plane spanning trees satisfies the recursive formula $p_{n+1} = 5(2^{2(n-2)})p_n$ for $n\geq2$.

\end{theorem}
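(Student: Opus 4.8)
The plan is to prove the formula by a recursion that peels the pair $\{v_1,w_1\}$ off a partition of $T_{2(n+1)}$. Fix a partition $\mathcal{S}'=\{S'_2,S'_3,\ldots,S'_{n+1}\}$ of the complete twisted graph $T'$ on the vertices $v_2,v_3,\ldots,v_{n+1},w_{n+1},\ldots,w_2$; since $T'$ is isomorphic to $T_{2n}$, there are $p_n$ choices for $\mathcal{S}'$, and by Remark \ref{menos} every partition $\mathcal{S}=\{S_1,S_2,\ldots,S_{n+1}\}$ of $T_{2(n+1)}$ gives rise to exactly one such $\mathcal{S}'$ via $S'_i=S_i-\{v_1,w_1\}$, while $\mathcal{S}$ is recovered from $\mathcal{S}'$ together with $S_1$ and the way $v_1,w_1$ attach to the remaining trees. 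Hence it suffices to show that \emph{every} $\mathcal{S}'$ has exactly $5\cdot 2^{2(n-2)}$ extensions.

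I would first describe the extensions explicitly. By Remark \ref{S_1} and Lemma \ref{induced}, $S_1$ is forced to be one of exactly two balanced double stars with center edge $v_1w_1$: it contains $v_1w_j$ and $w_1v_j$ for $2\le j\le n$, together with either $\{v_1w_{n+1},w_1v_{n+1}\}$ or $\{v_1v_{n+1},w_1w_{n+1}\}$. As in the proof of Remark \ref{menos}, $v_1$ and $w_1$ are leaves of each $S_i$ with $i\ge 2$, so once $S_1$ is fixed an extension is precisely a pair of bijections $\sigma\colon\{2,\ldots,n+1\}\to X$ and $\tau\colon\{2,\ldots,n+1\}\to Y$, where $X$ (respectively $Y$) is the set of vertices joined to $v_1$ (respectively $w_1$) by an edge not in $S_1$; one of $X,Y$ contains $v_{n+1}$ and the other $w_{n+1}$ according to the choice of $S_1$, while the remaining vertices of $X$ are $v_2,\ldots,v_n$ and those of $Y$ are $w_2,\ldots,w_n$. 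The graph $S_i:=S'_i\cup\{v_1\sigma(i),w_1\tau(i)\}$ is automatically a spanning tree, and in the order $v_1<v_2<\cdots<v_{n+1}<w_{n+1}<\cdots<w_1$ the edge $v_1\sigma(i)$ never crosses $w_1\tau(i)$; so the only requirement is that each $S_i$ be plane, which by the twisted crossing rule decouples into: no edge of $S'_i$ lies entirely to the left of $\sigma(i)$, and no edge of $S'_i$ lies entirely to the right of $\tau(i)$. Equivalently, $\sigma(i)$ must be one of the first $p_i-1$ vertices of $T'$, where $p_i$ is the smallest position attained by the larger endpoint of an edge of $S'_i$, and dually for $\tau(i)$ with the quantities $q_i$. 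Thus the number of extensions of $\mathcal{S}'$ is a sum of two products, $\sum_{S_1}(\#\ \mathrm{admissible}\ \sigma)\cdot(\#\ \mathrm{admissible}\ \tau)$.

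The heart of the proof is to evaluate this sum and to see that it equals $5\cdot 2^{2(n-2)}$ for every $\mathcal{S}'$. Lemma \ref{induced} applied to $\mathcal{S}'$ shows that inside each $S'_i$ the vertices $v_{i+1},\ldots,v_n$ hang off $w_i$ and $w_{i+1},\ldots,w_n$ hang off $v_i$, so $S'_i$ contains no edge within $\{v_i,\ldots,v_n\}$ nor within $\{w_i,\ldots,w_n\}$; combined with Remark \ref{edgesv_1v_i} for $\mathcal{S}'$ (which places $v_2v_3,v_2v_4,\ldots,v_2v_{n+1},v_2w_{n+1}$ in distinct trees, and dually for the $w$'s), this controls the bipartite ``attachment graphs'' whose perfect matchings are counted by $\#\ \mathrm{admissible}\ \sigma$ and $\#\ \mathrm{admissible}\ \tau$: each is staircase-shaped, so its number of perfect matchings is a product read off from the shape. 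For the choice of $S_1$ with $v_{n+1}\in X$, the set $X$ is an initial segment and $Y$ a final segment of the order of $T'$, which makes both counts clean and, I expect, yields $4^{n-1}$ extensions; the other choice displaces one vertex of each of $X,Y$ past a gap and yields $4^{n-2}$; since $4^{n-1}+4^{n-2}=5\cdot 2^{2(n-2)}$, this gives the recursion (the case $n=2$, i.e. $p_3=5p_2$, being verified directly).

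The main obstacle is exactly this last step: the quantities $p_i,q_i$ genuinely depend on $\mathcal{S}'$, yet the two staircase permanents must always combine to the same value. The cleanest route I see is to make the count itself recursive — peel $\{v_2,w_2\}$ off $\mathcal{S}'$ and track how the admissibility data $(p_i),(q_i)$ transforms — so that ``independence of $\mathcal{S}'$'' is proved in the same breath as the recursion, rather than by a direct evaluation of the two permanents, which is possible but requires careful bookkeeping of the staircase shapes forced by Lemma \ref{induced} and Remark \ref{edgesv_1v_i}.
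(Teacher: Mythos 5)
Your setup is the same as the paper's: peel off $\{v_1,w_1\}$, use Remark \ref{menos} to reduce to a partition $\mathcal{S}'$ of the smaller twisted graph, split into the two possible double stars $S_1$ given by Lemma \ref{induced}, observe that $v_1$ and $w_1$ are leaves of every $S_i$ with $i\ge 2$ so that an extension is a pair of assignments of the leftover $v_1$-edges and $w_1$-edges to the trees of $\mathcal{S}'$, note that the two assignments are independent, and aim at the counts $4^{n-1}$ and $4^{n-2}$ whose sum is $5\cdot 2^{2(n-2)}$. But the proposal stops short of the one step that actually carries the theorem: you never prove that the number of admissible assignments equals $2^{n-1}$ (respectively $2^{n-2}$) for \emph{every} $\mathcal{S}'$. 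You state the expected values with ``I expect,'' you correctly observe that your admissibility data $(p_i),(q_i)$ depends on $\mathcal{S}'$, you call reconciling this ``the main obstacle,'' and the remedy you sketch (a second recursion peeling $\{v_2,w_2\}$ off $\mathcal{S}'$) is not carried out. As written, the central count is a conjecture inside the proof, so there is a genuine gap.

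The idea that closes it — and that the paper uses — is Remark \ref{edgesv_1v_i} applied to $\mathcal{S}'$, read together with planarity of each $S'_i$. The $n$ edges $v_2v_3,\ldots,v_2v_{n+1},v_2w_{n+1}$ lie one in each tree of $\mathcal{S}'$; if $S'_{i_j}$ is the tree containing $v_2v_j$, then no other edge of $S'_{i_j}$ can have both endpoints in $\{v_2,\ldots,v_{j-1}\}$: such an edge either would be a second $v_2v_b$ (excluded by Remark \ref{edgesv_1v_i}) or would be nested strictly inside $v_2v_j$ and hence cross it, contradicting that $S'_{i_j}$ is plane. Consequently the set of $v_1$-edges attachable to $S'_{i_j}$ is \emph{exactly} the initial segment $\{v_1v_2,\ldots,v_1v_j\}$ (and all $v_1v_i$ are attachable to the tree containing $v_2w_{n+1}$), independently of any further structure of $\mathcal{S}'$ — your quantities $p_i$ are irrelevant once this is observed. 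The greedy count over $j=3,4,\ldots$ then gives exactly $2^{n-1}$ assignments in the first case, and $2^{n-2}$ in the second case (where $v_1w_{n+1}$ is forced into the tree containing $v_2w_{n+1}$), uniformly in $\mathcal{S}'$; the dual argument handles the $w_1$-edges. Supplying this observation turns your outline into the paper's proof; without it, the independence from $\mathcal{S}'$ that the recursion $p_{n+1}=5(2^{2(n-2)})p_n$ requires is not established.
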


\begin{proof}

 Let $n \geq 2$ be an integer and $\mathcal{S'} = \{S'_2, S'_3, \ldots, S'_{n+1}\}$ be a partition of the complete twisted graph with vertices $v_2, v_3, \ldots, v_{n+1}, w_{n+1}, \allowbreak w_n, \ldots, w_2$ into plane spanning trees. We show that there are $5(2^{2(n-2)})$ partitions $\mathcal{S} = \{S_1, S_2, \allowbreak \ldots, S_{n+1}\}$ of $T_{2(n+1)}$ where $S_i - \{v_1,w_1\} = S'_i$ for $i=2, 3, \ldots, n+1$. By Lemma \ref{induced}, for all such partitions, tree $S_1$ is a balanced double star with center edge $v_1w_1$, containing all edges $v_1w_i$ and $w_1v_i$ for $i = 2, 3, \ldots, n$ and one of the following pairs of edges $\{v_1w_{n+1}, \allowbreak w_1v_{n+1}\}$ or $\{v_1v_{n+1}, \allowbreak w_1w_{n+1}\}$. 
 
 We show first that there are $2^{2(n-1)}$ partitions where tree $S_1$ contains edges $v_1w_{n+1}$ and $ w_1v_{n+1}$. Notice that in such cases the edges of $T_{2(n+1)}$ not contained in $E(S_1) \cup E(S'_2) \cup \cdots \cup E(S'_{n+1})$ are edges $v_1v_i$ and $w_1w_i$ with $i = 2, 3, \ldots, n+1$. We claim that there are $2^{n-1}$ ways to attach  edges $v_1v_i$, with $i = 2, 3, \ldots, n+1$, and $2^{n-1}$ ways to attach edges $w_1w_i$, with $i = 2, 3, \ldots, n+1$, to $S'_2, S'_3, \ldots, S'_{n+1}$ to obtain a partition $S_1, S_2, \ldots, S_{n+1}$ of $T_{2(n+1)}$.
 
 By Remark \ref{edgesv_1v_i}, each tree in $\mathcal{S'}$ contains one of the edges $v_2v_3, v_2v_4, \ldots, v_2v_{n+1}, \allowbreak v_2w_{n+1}$. For $j = 3, 4, \ldots, n+1$, let $i_j$ be an integer such that $v_2v_{j}$ is an edge of $S'_{i_j}$ and let $i_{n+2}$ be an integer such that edge $v_2w_{n+1}$ lies in $S'_{i_{n+2}}$. 
 
 Notice that for $j=3, 4, \ldots, n+1$ any single edge in $\{v_1v_2, v_1v_3, \ldots, v_1v_{j}\}$ can be attached to $S'_{i_j}$ and any single edge in $\{v_1v_2, v_1v_3, \ldots, v_1v_{n+1}\}$ can be attached to $S'_{i_{n+2}}$ obtaining a plane tree.  We can choose first any edge in $\{v_1v_2, v_1v_3\}$ to attach to $S'_{i_3}$, then choose any remaining edge in $\{v_1v_2, v_1v_3, \allowbreak v_1v_4\}$ to attach to $S'_{i_4}$ and so on. For $S'_{i_3}, S'_{i_4}, \ldots, S'_{i_{n+1}}$ there are two remaining edges and for $S'_{i_{n+2}}$ only one possible edge remains. This gives $2^{n-1}$ ways to attach edges $v_1v_i$, with $i = 2, 3, \ldots, n+1$ to $S'_2, S'_3, \ldots, S'_{n+1}$. 
 
 Analogously, there are also $2^{n-1}$ ways to attach edges $w_1w_i$, with $i = 2, 3, \allowbreak \ldots, n+1$ to $S'_2, S'_3, \ldots, S'_{n+1}$. Since the ways of attaching edges $v_1v_i$ and edges $w_1w_i$, with $i = 2, 3, \ldots, n+1$ are independent, there are $2^{2(n-1)}$ ways to obtain a partition $\{S_1, S_2, \ldots, S_{n+1}\}$ of $T_{2(n+1)}$ from $\{S'_2, S'_3, \ldots, S'_{n+1}\}$ where $S_1$ contains edges $v_1w_{n+1}, w_1v_{n+1}$  and $S_i - \{v_1,w_1\} = S'_i$ for $i=2, 3, \ldots, n+1$.

 With a similar argument we can prove that there are $2^{2(n-2)}$ ways to obtain a partition $\{S_1, S_2, \ldots, S_{n+1}\}$ of $T_{2(n+1)}$ from $\{S'_2, S'_3, \ldots, S'_{n+1}\}$ where $S_1$ contains edges $v_1v_{n+1}$ and $ w_1w_{n+1}$  and $S_i - \{v_1,w_1\} = S'_i$ for $i=2, 3, \ldots, n+1$. Therefore $p_{n+1} = (2^{2(n-1)} + 2^{2(n-2)})p_n = 5(2^{2(n-2)})p_n$. 
\end{proof}

\begin{corollary}
\label{principalnoiso}

For  $n \geq 2$ there are $(5^{n-2})(2^{1+(n-2)(n-3)})$ partitions of the complete twisted graph $T_{2n}$ into plane spanning trees.

\end{corollary}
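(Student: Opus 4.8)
The plan is to obtain the closed form directly from the recursion in Theorem \ref{recursiva} by induction on $n$, after pinning down the base case. For $n=2$ one argues by hand: by Remark \ref{S_1} (equivalently Lemma \ref{induced} with $n=2$), in any partition $\{S_1,S_2\}$ of $T_4$ the tree $S_1$ is a balanced double star with center edge $v_1w_1$ whose edge set is either $\{v_1w_2,v_1w_1,w_1v_2\}$ or $\{v_1v_2,v_1w_1,w_1w_2\}$, and in each case $S_2$ is forced to be the complementary tree, which is plane. Hence $p_2=2$, in agreement with $(5^{0})(2^{1+(0)(-1)})=2$.

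For the inductive step, suppose $p_n=(5^{n-2})(2^{1+(n-2)(n-3)})$ for some $n\ge 2$. Applying Theorem \ref{recursiva},
\[
p_{n+1}=5\cdot 2^{2(n-2)}\cdot p_n=5^{\,n-1}\cdot 2^{\,1+2(n-2)+(n-2)(n-3)}.
\]
The exponent of $2$ factors as $2(n-2)+(n-2)(n-3)=(n-2)\bigl(2+(n-3)\bigr)=(n-1)(n-2)$, so
\[
p_{n+1}=\bigl(5^{(n+1)-2}\bigr)\bigl(2^{1+((n+1)-2)((n+1)-3)}\bigr),
\]
which is exactly the claimed formula for $n+1$. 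Alternatively, one can simply unroll the recursion: for $n\ge 2$, $p_n=p_2\prod_{m=3}^{n}\bigl(5\cdot 2^{2(m-3)}\bigr)$, which contributes $n-2$ factors of $5$ and a power of $2$ whose exponent is $1+\sum_{m=3}^{n}2(m-3)=1+2\sum_{j=0}^{n-3}j=1+(n-2)(n-3)$.

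There is essentially no obstacle here beyond bookkeeping: the substantive content is already in Theorem \ref{recursiva}. The two points that require a little care are the index range of the recursion (it is stated for $n\ge 2$ and relates $p_{n+1}$ to $p_n$, so the induction must be anchored at $p_2$, not $p_1$) and the elementary factoring $2(n-2)+(n-2)(n-3)=(n-1)(n-2)$ of the quadratic exponent; both are routine.
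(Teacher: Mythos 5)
Your proof is correct and follows the same route as the paper: the paper also deduces the closed form from the recursion of Theorem \ref{recursiva} by induction anchored at $p_2=2$, merely omitting the arithmetic you spell out. Your explicit verification of $p_2=2$ and the exponent bookkeeping simply fill in details the paper leaves as ``a simple proof by induction.''
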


\begin{proof}
By Theorem \ref{recursiva}, the number $p_n$ of partitions of the complete twisted graph $T_{2n}$ into plane spanning trees satisfies the recursive formula $p_{n+1} =  5(2^{2(n-2)})p_n$. 

A simple proof by induction shows that for $p_n = (5^{n-2})(2^{1+(n-2)(n-3)})$ for $n \geq 2$, since $T_4$ has $p_2 = 2$ such partitions.
\end{proof}

\section{Partitions into isomorphic trees}
\label{Section 4}

In this section we give a formula for the number of partitions of the complete twisted graph $T_{2n}$ into isomorphic plane spanning trees $S_1, S_2, \ldots, S_n$.

\begin{lemma}
\label{primeroiso}
Let $n\ge 2$ be an integer and $\mathcal{S} = \{S_1, S_2, \ldots, S_n\}$ be a partition  of the complete twisted graph $T_{2n}$ into isomorphic plane spanning trees. Then for $i=1, 2, \ldots,n$ tree $S_i$ is a balanced double star with center edge $v_iw_i$. 
\end{lemma}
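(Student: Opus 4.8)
The plan is to argue by induction on $n$, piggybacking on the structural results already established. The base case $n=2$ is immediate: by Remark \ref{S_1} (or Lemma \ref{induced}), $S_1$ is a balanced double star with center edge $v_1w_1$ on four vertices, and since $S_1\cong S_2$, tree $S_2$ must also be a balanced double star; its center edge is forced to be $v_2w_2$ because the four crossing-incompatible edges $v_iw_i$ must lie in distinct trees, and $v_2w_2$ is the only such edge available to $S_2$.

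For the inductive step, assume the statement holds for $n=k$ and let $\mathcal{S}=\{S_1,\ldots,S_{k+1}\}$ be a partition of $T_{2(k+1)}$ into isomorphic plane spanning trees. By Remark \ref{S_1}, $S_1$ is a balanced double star with center $v_1w_1$; since all $S_i$ are isomorphic, every $S_i$ is a balanced double star (as an abstract tree). The key reduction is Remark \ref{menos}: the trees $S_i-\{v_1,w_1\}$ for $i=2,\ldots,k+1$ form a partition of the complete twisted graph on $v_2,\ldots,v_{k+1},w_{k+1},\ldots,w_2$ into plane spanning trees. I would first check that deleting the two leaves $v_1,w_1$ from a balanced double star on $2(k+1)$ vertices yields a balanced double star on $2k$ vertices: indeed each of $v_1,w_1$ has degree $1$ in $S_i$ (from the proof of Remark \ref{menos}), so removing them removes one leaf from each side of the double star, preserving balance. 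Hence $\{S_i-\{v_1,w_1\}\}_{i=2}^{k+1}$ is a partition of $T_{2k}$ into \emph{isomorphic} plane spanning trees (all balanced double stars on $2k$ vertices), so the induction hypothesis applies: for $i=2,\ldots,k+1$, the tree $S_i-\{v_1,w_1\}$ is a balanced double star whose center edge joins the $i$-th vertex from each end of the relabelled twisted graph, i.e. the center edge is $v_iw_i$.

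It then remains to promote this back to $S_i$ itself. The center edge of the double star $S_i-\{v_1,w_1\}$ is $v_iw_i$; since $S_i$ is a balanced double star containing $v_iw_i$ and $S_i-\{v_1,w_1\}$ is obtained by deleting two leaves, the center edge of $S_i$ is again $v_iw_i$ (deleting leaves cannot move the center edge of a double star with at least three vertices on each side — one needs $k\ge 2$ here, which holds since $n=k+1\ge 3$; the case $k=1$, i.e. $n=2$, is the already-settled base case). This finishes all of $S_2,\ldots,S_{k+1}$, and $S_1$ was handled by Remark \ref{S_1}, completing the induction.

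The main obstacle is the bookkeeping at the bottom of the induction: one must be careful that ``balanced double star'' is genuinely closed under deleting the pair $\{v_1,w_1\}$ (this uses that $v_1,w_1$ are leaves of each $S_i$, $i\ge2$, which is exactly the content extracted in Remark \ref{menos}), and that the center edge is not disturbed — which requires each side of the double star to retain at least two vertices, forcing a separate treatment of small $n$. Everything else is a direct application of Remark \ref{S_1}, Remark \ref{menos}, and the inductive hypothesis.
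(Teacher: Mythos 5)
Your inductive route (reduce via Remark \ref{menos}, apply the lemma to the smaller twisted graph, then lift the center edge back) is different from the paper's proof, which does no new induction: the paper cites Lemma \ref{induced} to get that $S_i[V_i]$ is a balanced double star with center edge $v_iw_i$ sitting inside the balanced double star $S_i$ (forcing $v_iw_i$ to be the center edge of $S_i$ for $i\le n-1$), and then a degree count ($d_{S_i}(v_n)=d_{S_i}(w_n)=1$ for $i\le n-1$) to handle $S_n$. Your plan could work, but as written it has a genuine gap at its pivotal step: you assert that because $v_1$ and $w_1$ are leaves of $S_i$ ($i\ge 2$), deleting them ``removes one leaf from each side of the double star, preserving balance.'' Leafness alone does not give this. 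If $v_1$ and $w_1$ were both attached to the \emph{same} center of $S_i$, then $S_i-\{v_1,w_1\}$ would be an unbalanced double star; worse, this could happen for some $i$ and not others, so the trees $S_i-\{v_1,w_1\}$ need not be pairwise isomorphic, and your induction hypothesis (which requires a partition into \emph{isomorphic} trees) could not be invoked at all. Remark \ref{menos} only supplies that $v_1,w_1$ have degree $1$ in each $S_i$; it says nothing about which center they hang from.

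The claim is true, but proving it requires the global information of Lemma \ref{induced} (or Remark \ref{edgesv_1v_i}): since $S_1$ contains all edges $v_1w_j$ and $w_1v_j$ for $2\le j\le n-1$ together with one of the pairs $\{v_1w_n,w_1v_n\}$ or $\{v_1v_n,w_1w_n\}$, the edge of $S_i$ at $v_1$ has the form $v_1v_j$ (or possibly $v_1w_n$) while the edge at $w_1$ has the form $w_1w_j$ (or possibly $w_1v_n$), and in no case can these two edges share their other endpoint; since the neighbor of a leaf in a double star is a center, $v_1$ and $w_1$ are attached to the two distinct centers, and balance is preserved. Once you are forced to invoke Lemma \ref{induced} anyway, the paper's direct argument is shorter than rerunning an induction. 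Two smaller points: in the base case, ``$v_2w_2$ is the only such edge available to $S_2$'' shows $v_2w_2\in E(S_2)$, not that it is the middle edge of the path $S_2$ (this follows, e.g., because $v_1$ and $w_1$ have degree $1$ in $S_2$ and hence are the path's endpoints), and $T_4$ has two, not four, pairwise crossing edges $v_iw_i$. The lifting step at the end (centers of $S_i-\{v_1,w_1\}$ have degree $k\ge 2$, hence remain the centers of $S_i$) is fine.
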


\begin{proof}
    By Remark \ref{S_1}, tree $S_1$ is a balanced double star with center edge $v_1v_{2n}$. Since $S_i$ is isomorphic to $S_1$ for $i = 2, 3, \ldots, n$, all trees in $\mathcal{S}$ are balanced double stars. By Lemma \ref{induced}, for $i = 2, 3, \ldots, n-1$, tree $S_i[V_i]$ is a balanced double star with center edge $v_iw_i$, therefore $v_iw_i$ is also the center edge of $S_i$  since $S_i[V_i]]$ is a balanced double star contained in balanced double star $S_i$. 
    
    Finally, since we have shown that for $i = 1, 2, \ldots n-1 $ tree $S_i$ is a balanced double star with center edge $v_iw_i$, we have $d_{S_i}(v_n) = d_{S_i}(w_n) = 1 $ for  $i = 1, 2, \ldots n-1 $. This implies $d_{S_n}(v_n) = d_{S_n}(w_{n}) = n$ and therefore edge $v_nw_n$ must be the center edge of $S_n$.  
\end{proof}

For any vertex $u$ of a graph $G$, let $N_G(u)$ be the set of vertices of $G$ which are adjacent to $u$ in $G$.

Let $n \geq 2$ be an integer. For $k=1, 2, \ldots, n$, denote by $A_k$ be the spanning balanced double star of $T_{2n}$ with center edge $v_nw_n$ and such that $N_{A_k}(v_n) =\{v_1,v_2, \ldots, v_ {n-k}\}\cup \{w_{n}, \allowbreak w_{n-1}, \ldots, w_{n-(k-1)} \}$ for $k \leq n-1$ and $N_{A_k}(v_n) = \{w_n, \allowbreak w_{n-1}, \ldots, \allowbreak w_1\}$ for $k=n$. Since $A_k$ is a balanced double star with center edge $v_nw_n$, $N_{A_k}(w_n) =\{w_1,w_2, \ldots, \allowbreak w_ {n-k}\}  \cup \{v_{n},  v_{n-1}, \ldots, v_{n-(k-1)} \}$ for $k \leq n-1$ and $N_{A_k}(w_n) = \{v_n, v_{n-1}, \ldots, \allowbreak v_1\}$ for $k=n$.

\begin{theorem}
\label{Sn=Ak}
  Let $n \ge 2$ be an integer. If $\mathcal{S} = \{S_1, S_2, \ldots, S_n\}$ is a partition  of the complete twisted graph $T_{2n}$ into isomorphic plane spanning trees, then there is an integer $k$ with  $1 \leq k \leq n$  such that $S_n =A_{k}$.
\end{theorem}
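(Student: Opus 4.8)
The plan is to invoke Lemma~\ref{primeroiso} to reduce the statement to a purely combinatorial description of the leaf set of the double star $S_n$, and then to read the planarity constraints directly off the crossing rule of $T_{2n}$.

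By Lemma~\ref{primeroiso}, $S_n$ is a balanced double star with center edge $v_nw_n$. Hence $S_n$ is completely determined by the partition of the remaining $2n-2$ vertices $U=\{v_1,\dots,v_{n-1},w_1,\dots,w_{n-1}\}$ into the set $P$ of leaves adjacent to $v_n$ and the set $Q$ of leaves adjacent to $w_n$, with $|P|=|Q|=n-1$. Comparing with the definition of $A_k$, the set of leaves of $v_n$ in $A_k$ is $\{v_1,\dots,v_{n-k}\}\cup\{w_{n-k+1},\dots,w_{n-1}\}$, so it suffices to prove that $P=\{v_1,\dots,v_m\}\cup\{w_{m+1},\dots,w_{n-1}\}$ for some integer $0\le m\le n-1$; this yields $S_n=A_{n-m}$ with $1\le n-m\le n$, the values $m=n-1$ and $m=0$ corresponding to $k=1$ and $k=n$.

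The next step is to determine, from the crossing rule and the vertex order $v_1,\dots,v_n,w_n,\dots,w_1$, which edges incident to $v_n$ cross which edges incident to $w_n$; since edges sharing a vertex never cross, these are the only pairs of edges of $S_n$ that could possibly cross. The computation (translating $w_i=v_{2n+1-i}$ back into the original labels of $T_{2n}$ and applying the strict nesting condition from the definition of $T_n$) gives: $v_nv_i$ and $w_nv_j$ cross precisely when $i>j$; $v_nw_i$ and $w_nw_j$ cross precisely when $i<j$; and no edge of the form $v_nv_i$ crosses an edge $w_nw_j$, nor does $v_nw_i$ cross $w_nv_j$. Because $S_n$ is plane, the first fact forces $i<j$ whenever $v_i\in P$ and $v_j\in Q$, and the second forces $i>j$ whenever $w_i\in P$ and $w_j\in Q$. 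Therefore $P$ contains an initial segment $\{v_1,\dots,v_m\}$ of the $v$'s and a final segment $\{w_{l+1},\dots,w_{n-1}\}$ of the $w$'s for some $m,l$; counting $|P|=n-1$ forces $m=l$, which is exactly the desired form.

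The only real obstacle is carrying out the four crossing computations above correctly: one must pass through the renaming $w_i=v_{2n+1-i}$, keep track of which endpoint index is the larger one in each edge, and apply the condition ``$i<s<t<j$ or $s<i<j<t$'' carefully. Once these are in hand the remainder is routine bookkeeping, including the verification of the extreme cases $k=1$ and $k=n$.
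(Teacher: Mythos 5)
Your proof is correct and takes essentially the same route as the paper: reduce via Lemma~\ref{primeroiso} to a balanced double star with center edge $v_nw_n$, then use the crossing rule of $T_{2n}$ on pairs of edges incident to $v_n$ and $w_n$ to force the leaf sets into the prefix/suffix pattern that defines $A_k$. If anything, your version is slightly more explicit: the paper pins down only the $w$-neighbours of $v_n$ and $w_n$ and leaves the distribution of $v_1,\dots,v_{n-1}$ implicit, whereas you verify both sides and the count $m=l$ directly.
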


\begin{proof}
 By Lemma \ref{primeroiso}, tree $S_n$ is a balanced double star with center edge $v_nw_n$; let $k$ $=$ max $\{j : v_nw_{n}, v_nw_{n-1}, \ldots, v_nw_{n-j+1} \in E(S_n)\}$. 
 
 If $k = n$, then vertex $v_{n}$ is adjacent in $S_n$ to all vertices $w_n, w_{n-1}, \ldots, w_1$. In this case $S_n = A_n$ since $S_n$ is a balanced double star with center edge $v_nw_n$.
 
 If $k < n$, then $w_nw_{n-k} \in E(S_n)$; this implies $w_nw_{n-k-1}, w_nw_{n-k-2},  \dots, \allowbreak w_nw_1 \in E(S_n)$, otherwise $v_nw_{n-k-j}$ is an edge of $S_n$ for some integer $j$ with $1\leq j \leq n-k-1$, which is not possible since edges $w_nw_{n-k}$ and $v_nw_{n-k-j}$ cross each other. Therefore $S_n = A_k$.
\end{proof}

\begin{lemma} 
\label{exterior}

For $i = 2, 3, \ldots, n - 1$, tree $S_i$ contains edges $v_1v_i, v_2v_i, \ldots, v_{i-1}v_i$ and edges $w_1w_i, w_2w_i, \ldots, w_{i-1}w_i$. 
\end{lemma}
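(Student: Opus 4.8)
The plan is to prove this by induction on $i$, peeling off the outermost pair of vertices $v_1,w_1$ exactly as in the proof of Lemma~\ref{induced}, and using the isomorphism hypothesis together with Theorem~\ref{Sn=Ak}. First I would record the base observation: by Lemma~\ref{primeroiso} every $S_j$ is a balanced double star with center edge $v_jw_j$, and by Lemma~\ref{induced} the induced subgraph $S_i[V_i]$ is a balanced double star with center $v_iw_i$ containing all edges $v_iw_{n-1},\dots,v_iw_{i+1}$ and $w_iv_{n-1},\dots,w_iv_{i+1}$, plus one of $\{v_iw_n,w_iv_n\}$ or $\{v_iv_n,w_iw_n\}$. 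So inside $V_i$ the vertices $v_1,\dots,v_{i-1}$ and $w_1,\dots,w_{i-1}$ simply do not appear; the claim is about where the ``backward'' edges $v_jv_i$ and $w_jw_i$ ($j<i$) land, and the content is that they all land in $S_i$ itself rather than in some $S_j$ with $j\neq i$.

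The key step is to show, for each fixed $i$ with $2\le i\le n-1$ and each $j$ with $1\le j\le i-1$, that $v_jv_i\in E(S_i)$ and $w_jw_i\in E(S_i)$. I would argue that $v_jv_i$ cannot lie in $S_\ell$ for any $\ell\notin\{i\}$ with $\ell\le i$ by a crossing argument: for $\ell<i$, the center edge $v_\ell w_\ell$ of $S_\ell$ crosses $v_jv_i$ precisely when $j<\ell<i$ (using the twisted-graph crossing rule $v_jv_i$ crosses $v_\ell w_\ell$ iff $j<\ell<i$, since $w_\ell$ comes after all $v$'s), ruling out those $\ell$ strictly between $j$ and $i$; for the remaining small indices $\ell\le j$ one uses that $S_\ell$, being a balanced double star with center $v_\ell w_\ell$, contains (by Lemma~\ref{induced}, or by Remark~\ref{edgesv_1v_i} applied at the appropriate stage) the edge $w_\ell v_i$ or $v_\ell v_i$, so $v_j$ would have degree $\ge 2$ on the $v_\ell$-side while also... — more cleanly, I would instead show $v_jv_i$ cannot lie in any $S_\ell$ with $\ell>i$ either: such an $S_\ell$ has center $v_\ell w_\ell$ with $\ell>i$, and by Theorem~\ref{Sn=Ak} (together with the peeling of Remark~\ref{menos}, which says $S_\ell-\{v_1,\dots\}$ is governed by the same $A_k$-structure on the smaller twisted graph) the neighborhoods of $v_\ell$ and $w_\ell$ in $S_\ell$ restricted to $\{v_1,\dots,v_i\}$ are ``intervals from the outside,'' which forces $v_jv_i$ to create a crossing with the center edge unless $j$ and $i$ are adjacent in the relevant interval — and a short finite check closes this. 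Since every tree in the partition carries the edge $v_jv_i$ somewhere, the only surviving option is $S_i$. The argument for $w_jw_i\in E(S_i)$ is symmetric, exchanging the roles of $v$'s and $w$'s.

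Alternatively, and probably more robustly, I would run a single induction on $i$ using Remark~\ref{menos}: assume the statement holds for the complete twisted graph on $v_2,\dots,v_n,w_n,\dots,w_2$ with the partition $\{S'_2,\dots,S'_n\}$, $S'_\ell=S_\ell-\{v_1,w_1\}$. This immediately gives, for $3\le i\le n-1$, that $S'_i$ (hence $S_i$, since $v_1,w_1$ have degree $1$ and are leaves attached somewhere) contains $v_2v_i,\dots,v_{i-1}v_i$ and $w_2w_i,\dots,w_{i-1}w_i$. It then remains only to handle $j=1$: show $v_1v_i\in E(S_i)$ and $w_1w_i\in E(S_i)$ for $2\le i\le n-1$. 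For this I would combine Remark~\ref{edgesv_1v_i} — which says $v_1v_2,\dots,v_1v_n,v_1w_n$ lie in $n$ distinct trees — with the isomorphism hypothesis and Theorem~\ref{Sn=Ak}: since $S_1$ is the double star with center $v_1w_1$ that (by Lemma~\ref{induced}) contains $v_1w_2,\dots,v_1w_{n-1}$ and $w_1v_2,\dots,w_1v_{n-1}$, none of $v_1v_2,\dots,v_1v_{n-1}$ is in $S_1$; a crossing argument (edge $v_1v_i$ crosses the center $v_\ell w_\ell$ of $S_\ell$ exactly for $1<\ell<i$, and for $\ell>i$ one checks against the $A_k$-shape given by Theorem~\ref{Sn=Ak}) leaves $S_i$ as the only possible host, and symmetrically for $w_1w_i$.

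\textbf{Main obstacle.} The routine part is the induction bookkeeping; the delicate point is ruling out that $v_jv_i$ or $v_1v_i$ hides in a tree $S_\ell$ with $\ell>i$, because for those trees the center edge $v_\ell w_\ell$ does \emph{not} automatically cross $v_jv_i$. Here I expect to lean on Theorem~\ref{Sn=Ak} and its inductive analogues (applied to the peeled-off partitions of smaller twisted graphs, via repeated use of Remark~\ref{menos}) to pin down the neighborhood structure of $v_\ell,w_\ell$ within the ``low-index'' vertices as nested intervals, and then a finite crossing check — the kind of case analysis the authors perform in the proof of Theorem~\ref{Sn=Ak} — should finish it. I would expect the authors' actual proof to be shorter, probably invoking Lemma~\ref{induced} directly with the observation that $S_i[V_i]$ already forces $v_i$ and $w_i$ to have the maximum possible degree ``to the right,'' hence degree $1$ to each tree $S_\ell$, $\ell\neq i$, which collapses the crossing analysis to the nearly trivial case.
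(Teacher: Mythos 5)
There is a genuine gap at exactly the point you flag as the ``main obstacle.'' Your plan reduces to showing that $v_jv_i$ (and $v_1v_i$) cannot hide in a tree $S_\ell$ with $\ell>i$ (or $\ell\le j$), and there you only offer ``a short finite check'' based on Theorem~\ref{Sn=Ak} and an interval structure for the neighborhoods of $v_\ell,w_\ell$ that is never established: Theorem~\ref{Sn=Ak} describes only $S_n$, and the ``inductive analogues'' you invoke via Remark~\ref{menos} would require the peeled partition $\{S_\ell-\{v_1,w_1\}\}$ to again consist of \emph{isomorphic} trees, which is not known at this stage --- whether $S_\ell-\{v_1,w_1\}$ is balanced depends precisely on whether $v_1$ and $w_1$ attach to different centers of $S_\ell$, i.e.\ on (the $j=1$ case of) the very statement being proved. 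So the hardest step of your argument is left unproved, and the route proposed for it is at risk of circularity. The irony is that this step is immediate from Lemma~\ref{primeroiso}, which you cite but never exploit for this purpose: every $S_\ell$ is a double star with center edge $v_\ell w_\ell$, so every edge of $S_\ell$ is incident to $v_\ell$ or $w_\ell$; hence an edge $v_jv_i$ with $j<i$ can only lie in $S_j$ or $S_i$, and $S_j$ is excluded because by Lemma~\ref{induced} $w_jv_i\in E(S_j)$ for $i\le n-1$ while $v_i$ has degree $1$ in $S_j$. No appeal to Theorem~\ref{Sn=Ak}, no peeling induction, and no case analysis over $\ell$ is needed.

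The paper's proof is even more direct and is not the degree-count you guess at the end (counting degrees inside $V_i$ cannot decide whether $v_t$ attaches to $v_i$ or to $w_i$, so that closing suggestion does not work as stated). It argues entirely inside $S_i$: since $S_i$ is a balanced double star with center edge $v_iw_i$ (Lemma~\ref{primeroiso}), each $v_t$ with $t<i$ is adjacent in $S_i$ to $v_i$ or to $w_i$; by Lemma~\ref{induced}, $S_i$ contains $v_iv_n$ or $v_iw_n$, and the edge $v_tw_i$ crosses both of these in $T_{2n}$, so $v_tw_i\notin E(S_i)$ and therefore $v_tv_i\in E(S_i)$, with the symmetric argument giving $w_tw_i\in E(S_i)$. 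To repair your write-up, replace the elimination over all $S_\ell$ (and the reliance on Theorem~\ref{Sn=Ak}) by this one-tree crossing argument, or at least by the ``every edge of a double star touches its center'' observation above.
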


\begin{proof}

Let $i = 2, 3, \ldots, n - 1$ and $t = 1, 2, \ldots, i-1$. As $S_i$ is a balanced double star with center edge $v_iw_i$, vertex $v_t$ is adjacent either to $v_i$ or to $w_i$ in $S_i$. By Lemma \ref{induced}, tree $S_i$ contains one of the edges $v_iv_n$ or $v_iw_n$. Notice that edge $v_tw_i$ is not an edge of $S_i$ as it crosses both edges $v_iv_n$ and $v_iw_n$ in $T_{2n}$. This implies $v_t$ is adjacent to $v_i$ in $S_i$. Analogously $w_t$ is adjacent to $w_i$ in $S_i$.  

\end{proof}

\begin{theorem}
\label{determinado}
    Let $n \ge 2$ be an integer. For $k=1, 2, \ldots, n$ there exists a unique partition $\mathcal{S} = \{S_1, S_2, \ldots, \allowbreak  S_n\}$  of the complete twisted graph $T_{2n}$ into isomorphic plane spanning trees such that $S_n=A_k$.
\end{theorem}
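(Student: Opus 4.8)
The plan is to prove both existence and uniqueness of a partition $\mathcal{S}=\{S_1,\dots,S_n\}$ into isomorphic plane spanning trees with $S_n=A_k$ by induction on $n$, using the ``peeling'' structure already developed: Remark~\ref{menos} lets us strip vertices $v_1,w_1$, Lemma~\ref{induced} and Lemma~\ref{primeroiso} pin down each $S_i$ as a balanced double star with center edge $v_iw_i$, Lemma~\ref{exterior} forces all the ``inner'' edges $v_tv_i$ and $w_tw_i$ (for $t<i<n$) into $S_i$, and Theorem~\ref{Sn=Ak} tells us $S_n$ must be some $A_k$. Combining Lemma~\ref{exterior} with the balanced-double-star condition, once we know $N_{S_i}(v_i)\cap\{v_1,\dots,v_{i-1}\}$ and $N_{S_i}(w_i)\cap\{w_1,\dots,w_{i-1}\}$ are forced, the only freedom left in $S_i$ is how it distributes the vertices $v_{i+1},\dots,v_n,w_n,\dots,w_{i+1}$ between $v_i$ and $w_i$ — i.e. the ``type'' of $S_i$ on the outer block, which is exactly the datum recorded by the $A$-labelling applied to the twisted subgraph on $\{v_i,\dots,v_n,w_n,\dots,w_i\}$. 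So the real content is: the choice $S_n=A_k$ propagates downward and uniquely determines the type of every $S_i$, and consistency (edge-disjointness + all edges covered + isomorphism) holds precisely for this one configuration.

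Concretely, I would argue as follows. First I would handle the base case $n=2$ directly: $T_4$ has exactly two partitions into plane spanning trees (Corollary~\ref{principalnoiso} with $p_2=2$), both into balanced double stars, and one checks $S_2=A_1$ in one of them and $S_2=A_2$ in the other, giving the claim for $n=2$. For the inductive step, suppose the statement holds for $n-1$ (for the twisted graph on any $2(n-1)$ vertices). Given a target $A_k$ on $T_{2n}$, restrict attention to the twisted subgraph $T'$ on $\{v_2,\dots,v_n,w_n,\dots,w_2\}$: by Remark~\ref{menos} the trees $S'_i=S_i-\{v_1,w_1\}$, $i=2,\dots,n$, would form a partition of $T'$ into plane spanning trees, and if $\mathcal{S}$ is into isomorphic trees then each $S'_i$ is the balanced double star obtained from $S_i$ by deleting its two leaves $v_1,w_1$ — hence all the $S'_i$ are isomorphic balanced double stars. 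I would then show $S'_n$ is exactly the ``$A$-type'' tree on $T'$ indexed by $\min(k,n-1)$: deleting $v_1$ from $A_k$ removes a leaf of $w_n$ when $k\le n-1$ and a leaf of $v_n$ when $k=n$, and deleting $w_1$ symmetrically, and in either case the surviving neighbourhoods match the definition of $A_{\min(k,n-1)}$ on the $2(n-1)$-vertex twisted graph. By the induction hypothesis there is a unique such partition $\{S'_2,\dots,S'_n\}$ of $T'$. It remains to show this unique inner partition extends, in exactly one way, to a partition of $T_{2n}$ into isomorphic plane spanning trees with $S_n=A_k$.

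For the extension step, $S_1$ is forced by Remark~\ref{S_1}/Lemma~\ref{induced}: $S_1$ is the balanced double star with center $v_1w_1$ containing $v_1w_j,w_1v_j$ for $j=2,\dots,n-1$ and one of the two pairs $\{v_1w_n,w_1v_n\}$ or $\{v_1v_n,w_1w_n\}$; which pair is determined by whether $v_1w_n$ (equivalently $w_1v_n$) already lies in $S'_n=S_n$ — precisely, $v_1w_n\in E(S_1)$ iff $k\le n-1$ (since $v_nw_n,v_nw_{n-1},\dots$ being in $S_n=A_k$ with $k\le n-1$ leaves $v_1$ to be a leaf of $w_n$ in $A_k$'s completion, forcing the complementary choice in $S_1$), and $v_1v_n,w_1w_n\in E(S_1)$ iff $k=n$. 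This also fixes $d_{S_1}(v_1)=d_{S_1}(w_1)=n$, so $v_1,w_1$ are leaves in every $S_i$, $i\ge2$. The remaining uncovered edges of $T_{2n}$ are exactly $\{v_1v_i : i\in I_v\}\cup\{w_1w_i : i\in I_w\}$ for the appropriate index sets, and each must be attached to exactly one $S'_i$ as a pendant edge at $v_i$ or at $w_i$ while keeping $S_i$ plane, balanced, and isomorphic to $S_1$. Here is where the argument closes: attaching $v_1v_i$ to $S'_i$ raises $d(v_i)$ by one; isomorphism with $S_1$ (which has $d(v_1)=d(w_1)=n$) forces each $S_i$ to be a balanced double star with parts of size $n$, so each $v_i$ must receive exactly one pendant from $\{v_1,w_1\}$ and likewise each $w_i$ — and planarity (an edge $v_1v_i$ crosses $v_jw_j$ for $j<i$, etc.) together with the already-determined inner structure leaves no choice about which $S'_i$ gets $v_1v_i$ versus $w_1w_i$: the pendant $v_1v_i$ can only be planarly attached to the tree whose center edge ``nests inside'' $v_1v_i$, and similarly for $w_1w_i$. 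Thus the extension is unique, and one verifies it does yield $S_n=A_k$ and mutually isomorphic trees, completing the induction.

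I expect the main obstacle to be the extension/uniqueness step — specifically, making rigorous the claim that each leftover edge $v_1v_i$ (resp. $w_1w_i$) has a \emph{unique} planar pendant attachment among $S'_2,\dots,S'_n$, given that $S'_i$ is already a large balanced double star; this requires carefully tracking, for each $i$, which trees $S'_j$ contain an edge of the form $v_2v_j$ or similar (as in the counting argument of Theorem~\ref{recursiva}) and checking that the isomorphism constraint $d_{S_i}(v_i)=n$ collapses the $2^{2(n-2)}$-or-so free choices from the arbitrary case down to a single one. A clean way to organize this is to note that Lemma~\ref{exterior} already forces $v_1,\dots,v_{i-1}$ onto $v_i$ and $w_1,\dots,w_{i-1}$ onto $w_i$ inside $S_i$ (after un-deleting, $v_1$ onto $v_i$ iff the leftover edge $v_1v_i$ is the one assigned to $S_i$), so the only thing to pin down is that the specific index $i$ receiving $v_1v_i$ is forced by planarity to be compatible with the inner partition — which in turn is forced by the induction hypothesis. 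Everything else is bookkeeping with balanced double stars and the crossing rule of the twisted graph.
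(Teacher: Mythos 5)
Your decisive step --- uniqueness of the extension of the inner partition by the pendant edges at $v_1$ and $w_1$ --- rests on a false planarity claim. In the twisted order, the edge $v_1v_i$ crosses exactly the edges $v_sv_t$ with $1<s<t<i$; it crosses no center edge $v_jw_j$ at all, so the assertion that ``$v_1v_i$ crosses $v_jw_j$ for $j<i$'' and that the pendant $v_1v_i$ can only be planarly attached to the tree whose center edge nests inside it are both wrong. Indeed, the count in Theorem~\ref{recursiva} shows that in the arbitrary (non-isomorphic) setting there are $2^{n-1}$ planar ways to distribute the edges $v_1v_i$ among $S'_2,\ldots,S'_{n+1}$, so planarity alone provably cannot collapse the choice to a single one. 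What actually forces the attachment is the isomorphism hypothesis: by Lemma~\ref{primeroiso} each $S_i$ is a balanced double star with center edge $v_iw_i$, so $v_1$ hangs from $v_i$ or from $w_i$ in $S_i$, and the edge $v_1w_i$ is excluded because it crosses the edge of $S_i$ incident with $v_n$ (one of $v_iv_n,v_iw_n$, by Lemma~\ref{induced}); this is precisely Lemma~\ref{exterior} with $t=1$, which assigns $v_1v_i$ and $w_1w_i$ to $S_i$ outright. You gesture at this ``clean way'' but then re-route the forcing through the planarity/nesting argument, which is exactly where the proof as written breaks --- and you yourself flag this step as unresolved.

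Once Lemma~\ref{exterior} is used in full strength, the peeling induction becomes unnecessary, and that is the paper's route: Lemmas~\ref{induced} and~\ref{exterior} already fix every edge of $S_i$ except those incident with $v_n$ and $w_n$; since $S_i$ is a balanced double star with center $v_iw_i$, the only remaining decision is whether $v_n$ hangs from $v_i$ or from $w_i$, and $S_n=A_k$ settles this for every $i$ (if $v_nv_i\in E(A_k)$ then $v_n$ must hang from $w_i$; if $w_nv_i\in E(A_k)$ then from $v_i$; for $k=n$ it is $v_i$ for all $i$). Your induction additionally requires verifying that the restricted trees $S_i-\{v_1,w_1\}$ are again pairwise isomorphic balanced double stars and that $A_k$ restricts to the tree of type $\min(k,n-1)$ on the smaller twisted graph; these claims are true but add bookkeeping the direct argument avoids. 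Like the paper, you treat existence (that the determined family is indeed a partition into plane isomorphic trees) at the level of ``one verifies,'' so the extension step is the one substantive repair needed.
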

\begin{proof}  

There are two partitions of $T_4$ into isomorphic plane spanning trees, one contains $A_1$ and the other contains $A_2$, see Fig. \ref{T_4}.

\begin{figure}[ht!]
\centering
  \includegraphics[width= 4 in]{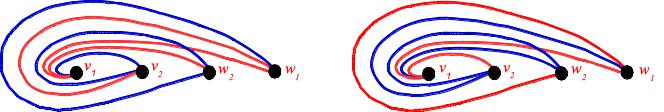}
  \caption{Two partitions of $T_4$. $S_2=A_1$ on the left and $S_2=A_2$ on the right. }
  \label{T_4}
\end{figure}

Assume $n \geq 3$. By Lemma \ref{induced} and Lemma \ref{exterior}, for $i=1, 2, \ldots, n-1$ the only edges of $S_i$ that are not fixed in any partition $\mathcal{S} = \{S_1, S_2, \ldots, S_n\}$ of $T_{2n}$ into isomorphic plane spanning trees are those incident to $v_n$ and to $w_n$. Let $k = 1, 2, \ldots, n$ and let $\mathcal{S} = \{S_1, S_2, \ldots, S_n\}$ be a such a partition of $T_{2n}$ with $S_n = A_k$. Notice that since each tree $S_i$ is a balanced double star with center edge $v_iw_i$, in order to prove that tree $S_i$ is  uniquely determined by $A_k$, it suffices to show which of the vertices $v_i$ or $w_i$ is adjacent to $v_n$ in $S_i$ for $i= 1, 2, \ldots, n-1$.

We first look at the case $k=n$. Then $S_n = A_n$ contains all edges $v_nw_i$ with $i = 1, 2, \ldots, n-1$. In this case $v_i$ must be adjacent to $v_n$ in $S_i$ for $i=1, 2, \ldots n-1$. 

Assume now $k < n$.  If $1 \leq i \leq n-k$, then $v_nv_i$ is an edge of $A_k=S_n$ and therefore cannot be an edge of $S_i$. This implies that $w_i$  is adjacent to $v_n$ in $S_i$ for $1 \leq i \leq n-k$. See tree $S_1$ in Fig. \ref{S_3=A_2}.

Analogously,  if $i > n-k$, then $w_nv_i$ is an edge of $S_n$ and therefore cannot be an edge of $S_i$. This implies that $v_i$  is adjacent to $v_n$ in $S_i$ for $i > n-k$. See tree $S_2$ in Fig. \ref{S_3=A_2}.

\begin{figure}[ht!]
\centering
  \includegraphics[width= 4.5 in]{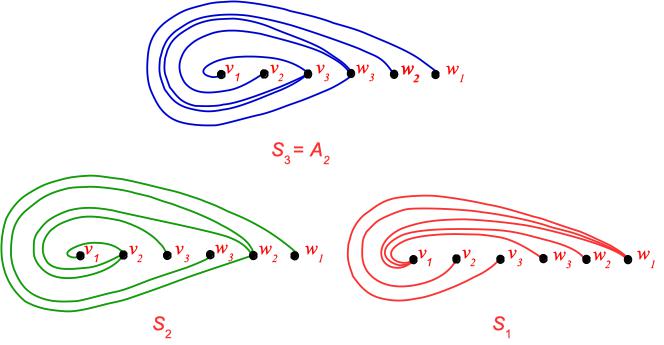}
  \caption{Partition of $T_6$ into balanced double stars $S_1,S_2, S_3$ with $S_3=A_2$. Here $v_3v_1 \in E(S_3)$ implies $v_3w_1 \in E(S_1)$ and $w_3v_2 \in E(S_3)$ implies $v_3v_2 \in E(S_2)$.}
  \label{S_3=A_2}
\end{figure}

\end{proof}

As an immediate consequence of theorems \ref{Sn=Ak} and \ref{determinado}, we have the following. 

\begin{corollary}
Let $n\geq 2$ be an integer. There are $n$ partitions of $T_{2n}$  into isomorphic plane spanning trees. 
\end{corollary}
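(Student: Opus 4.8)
The plan is to combine the two theorems immediately preceding this corollary. Theorem \ref{Sn=Ak} tells us that in \emph{any} partition $\mathcal{S}$ of $T_{2n}$ into isomorphic plane spanning trees, the tree $S_n$ (the one containing the pairwise-crossing edge $v_nw_n$) must be one of the $n$ explicitly described balanced double stars $A_1, A_2, \ldots, A_n$. Theorem \ref{determinado} tells us the converse-and-more: for each $k \in \{1, 2, \ldots, n\}$ there is \emph{exactly one} such partition with $S_n = A_k$. So the map sending a partition $\mathcal{S}$ to the index $k$ with $S_n = A_k$ is a well-defined surjection onto $\{1, \ldots, n\}$, and it is injective because the partition is uniquely determined by that value of $k$. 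Hence partitions of $T_{2n}$ into isomorphic plane spanning trees are in bijection with $\{1, 2, \ldots, n\}$, giving exactly $n$ of them.

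The key steps, in order, are: (i) invoke Theorem \ref{Sn=Ak} to see that every partition into isomorphic plane spanning trees determines a value $k \in \{1,\ldots,n\}$ via $S_n = A_k$; (ii) invoke Theorem \ref{determinado} to see that for each such $k$ there is a partition realizing it, so the correspondence is surjective; (iii) invoke the uniqueness clause of Theorem \ref{determinado} to see the correspondence is injective; (iv) conclude that the number of partitions equals $|\{1,\ldots,n\}| = n$. One should also note the base-case sanity check already embedded in the excerpt: for $n = 2$, $T_4$ has exactly two such partitions, one with $S_2 = A_1$ and one with $S_2 = A_2$, matching the count.

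Since this is a direct corollary, there is essentially no obstacle — all the work has been front-loaded into Theorems \ref{Sn=Ak} and \ref{determinado}. The only thing to be careful about is the bookkeeping: making explicit that the assignment $\mathcal{S} \mapsto k$ is the same correspondence in both directions, so that existence-plus-uniqueness genuinely yields a bijection rather than merely bounds $n \le p \le \text{(something)}$. The proof will be two or three sentences long.

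\begin{proof}
Let $\mathcal{S} = \{S_1, S_2, \ldots, S_n\}$ be a partition of $T_{2n}$ into isomorphic plane spanning trees, with $v_iw_i \in E(S_i)$. By Theorem \ref{Sn=Ak} there is an integer $k$ with $1 \leq k \leq n$ such that $S_n = A_k$. By Theorem \ref{determinado}, for each such $k$ there is exactly one partition $\mathcal{S}$ of $T_{2n}$ into isomorphic plane spanning trees with $S_n = A_k$. Hence the assignment $\mathcal{S} \mapsto k$ is a bijection between the set of partitions of $T_{2n}$ into isomorphic plane spanning trees and the set $\{1, 2, \ldots, n\}$, and therefore there are exactly $n$ such partitions.
\end{proof}
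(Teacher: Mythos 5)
Your proposal is correct and is exactly the paper's argument: the paper states the corollary as an immediate consequence of Theorems \ref{Sn=Ak} and \ref{determinado}, which is precisely the bijection $\mathcal{S} \mapsto k$ (with $S_n = A_k$) that you spell out. Nothing is missing; you have simply made the implicit bookkeeping explicit.
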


\end{document}